\documentclass[10pt,a4paper]{amsart}  % 26.02.2026
\usepackage{amssymb}
\usepackage{amsmath,amsfonts,amsthm}
\usepackage{amstext,latexsym,bm}
\usepackage{graphicx}
\usepackage{rotating} % for rotation

\usepackage{xcolor}
 
\usepackage{amscd}
 
\usepackage{amssymb}
\usepackage{amstext}
\usepackage{mathtools}
\usepackage{bbm}

\newtheorem{theorem}{Theorem}

\newtheorem{proposition}[theorem]{Proposition}
\theoremstyle{definition}

\newtheorem{remark}[theorem]{Remark}
\newtheorem{definition}[theorem]{Definition}

\numberwithin{equation}{section}
\numberwithin{theorem}{section}

\begin{document}
\author{Marc Taberner-Ortiz and Manfred Denker }
\title{Remarks on stationary GARCH processes under heavy tail distributions} 

\maketitle

\begin{abstract}  Let $(X_n)_{n\in \mathbb Z}$ be a GARCH process with $E(X_0^4)<\infty$, and let $\mu_n$ denote the distribution of $\frac 1{{\sqrt n}}\sum_{i=1}^n [X_i^2-\mathbb E(X_0^2)]$.
We derive a numerical approximation of $\mu_n$ when $x_1,...,x_n$ are observed. This yields the derivation of confidence intervals for $\mu= E(X_0^2)$ and we investigate 
the accuracy of these confidence intervals in comparison with standard ones based on normal approximation. Moreover, when the innovation process has heavy tail distribution, we improve the method using a new resampling method.
\end{abstract}

\section{Introduction}\label{sec:1}
GARCH processes describe
an important class of stationary time series, in particular, for applications in econometrics and forecasting (see \cite{TB} and \cite{RFE} for specific references). In order to set up the notation recall that a stationary real valued process $(X_n)_{n\in \mathbb Z}$ is called a GARCH($p,q$) ($p,q\in\mathbb N$) process if
there exists an i.i.d. process $(\xi_n)_{n\in \mathbb Z}$, a positive real valued process $(\sigma_n)_{n\in \mathbb Z}$ and non-negative  real numbers $a_0,a_1,...,a_p$, $b_1,...,b_q$ such that
\begin{eqnarray}
&& E\xi_0=0\qquad E\xi_0^2=1\label{eq:1.1}\\
&&  X_n= \sigma_n\xi_n\label{eq:1.2}\\
&& \sigma_n^2= a_0+\sum_{i=1}^p a_iX_{n-i}^2 + \sum_{j=1}^q b_j \sigma_{n-j}^2\label{eq:1.3}
\end{eqnarray}
Conditions for existence of such stationary processes can be found in \cite{L}.
In case $p=q=1$ the condition for stationarity are described in \cite{N}: The condition 
\begin{equation}\label{eq:1.4}
-\infty < \mathbb E(\log(a_1\xi_0+b_1)]<0
\end{equation}
is necessary and sufficient for the existence of stationary GARCH($1,1$) processes. In this case we have
\begin{equation}\label{eq:1.5a}
 \sigma_n^2= a_0\left(1+ \sum_{j=1}^\infty \prod_{i=1}^j (a_1\xi_{n-i}^2+b_1)\right).
 \end{equation}

In this note we restrict to the case $p=q=1$.  Presumably, the general case can be handled using a similar approach, the mathematical part seems to be  more complicated.  Instead, we extend the class of processes to augmented GARCH($1,1$) processes $(X_n)_{n\ge 1}$ where equation (\ref{eq:1.3}) is replaced by:
\newline
There exist a coordinatewise increasing, measurable function $\varphi:\mathbb R^2\to \mathbb R$  and a measurable strictly increasing function $\Lambda:\mathbb R_+\to\mathbb R$  such that
\begin{equation}\label{eq:1.5}
\Lambda(\sigma_n^2)= \varphi(\Lambda(\sigma_{n-1}^2), \xi_{n-1}^2).
\end{equation}
Note that the  recursion formula for $\sigma_n^2$ only involves $\xi_{n-1}$ and not $X_{n-1}$.

Returning to the case of a GARCH($1,1$) process it has been shown by Berkes et al. \cite{BHH} that the statistics
$$ T_n=\frac 1{\sqrt{n}} \sum_{k=1}^n[X_k^2-E(X_0^2)]$$
converges weakly to some centered normal distribution. In this note we give a direct simple proof. Our approach as well permits to show that 
\begin{equation}\label{eq:1.6}
\lim_{n\to\infty}  \frac 1{\log n}\sum_{k=1}^n \frac 1k \mathbbm 1_{\{T_k\le t\}}-\mathbb P(T_n\le t) =0.
\end{equation}
This result permits to construct new confidence intervals which will be numerically compared to those constructed from the Berkes et al. result. In order to obtain some information on the speed of convergence in (\ref{eq:1.6}) we suggest to show the limit approaching 0 when the left hand side is divided by $\mathbb P(T_n\le t)$.

 We derive equation (\ref{eq:1.6}) in Section \ref{sec:3}, the case of an augmented GARCH($1,1$) process is proved similar. Numerical results are deferred to Section \ref{sec:5}. As is shown there, the confidence levels obtained from (\ref{eq:1.6}) are hardly met when the innovation process has heavy tails. Therefore we use a new resampling method with stable distributions to improve the cover probability, its theoretical background is  contained in Section \ref{sec:4}.

The motivation for the present study may be seen in the imprecise  confidence intervals obtained by normal approximation  using the Berkes et al. result \cite{BHH}. This has been observed in \cite{MTO} and we present similar  simulations for a variety of innovation's process distributions here. The proposed distributions are: the standard normal distribution $N$, the $t$-distribution with distinct degrees of freedom and Pareto distributions, the representation $P(\alpha,x_m)$ is chosen to represent the Pareto distribution with scale parameter $x_m$ and shape parameter $\alpha$.\\

\begin{table}[h]
    \centering
    \begin{tabular}{ccccccc}
        &Sample size & 100 & 300 & 500 &  1000 & 1500 \\ 
        \hline
         &$N$  & 0.882& 0.922 & 0.928 & 0.952 & 0.947  \\
          &$t_8$& 0.858 &0.900& 0.922 & 0.927 & 0.928 \\
          &$t_6$ & 0.857 & 0.891 & 0.905& 0.907 & 0.925\\
          &$P(8,1)$ & 0.726 & 0.822 & 0.846 & 0.867 & 0.883 \\ 
          & $P(6,1)$& 0.666  & 0.747 & 0.791 & 0.808 & 0.844 \\
          \hline
              \end{tabular}
    \caption{Coverage rate for distinct sample sizes obtained from the normal approximation method.  The simulations represent   a 95\%
    coverage rate where 1000 simulations were performed of a GARCH(1,1) process with parameters $a_0=a_1=b_1=0.1 $.}%\label{tab:placeholder0}
\end{table}

\section{Local and global approximations}\label{sec:2}
The results for the stationary GARCH($1,1$) processes and their augmented modifications are stated in this section. Their proofs are to be found in the next section.
\begin{theorem}\label{theo:2.1}\cite{BHH} Let $(X_n)_{n\in \mathbb Z}$ be a stationary GARCH($1,1$) process, $X_0\in L_2$ and $\xi_0\in L_4$. Then there exists $\tau^2\ge 0$ such that the statistics
\begin{equation}\label{eq:2.1} T_n=\frac 1{\sqrt{n}} \sum_{k=1}^n [X_k^2- \mathbb E(X_0^2)]
\end{equation}
 converge weakly to a normal distribution with expectation 0 and variance $\tau^2$.
\end{theorem}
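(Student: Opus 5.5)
We approximate $T_n$ by sums of $m$-dependent stationary variables, apply the classical CLT for $m$-dependent sequences, and let $m\to\infty$ using a uniform $L_2$ bound on the approximation error (Billingsley's Theorem 3.2 / Bernstein blocking). Write $Y_k:=X_k^2=\sigma_k^2\xi_k^2$ and use the representation (\ref{eq:1.5a}),
$$\sigma_k^2=a_0\Bigl(1+\sum_{j=1}^\infty\prod_{i=1}^j(a_1\xi_{k-i}^2+b_1)\Bigr).$$
For $m\in\mathbb N$ define the truncation
$$\sigma_{k,m}^2=a_0\Bigl(1+\sum_{j=1}^m\prod_{i=1}^j(a_1\xi_{k-i}^2+b_1)\Bigr),\qquad Y_{k,m}=\sigma_{k,m}^2\xi_k^2 .$$
Then $Y_{k,m}$ is a measurable function of $\xi_{k-m},\dots,\xi_k$. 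Hence $(Y_{k,m})_k$ is stationary and $m$-dependent, and $Y_{k,m}\in L_2$ because $\xi_0\in L_4$.

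\textbf{Step 1 (moments).} Set $\gamma:=\mathbb E(a_1\xi_0^2+b_1)$. Since $X_0\in L_2$, taking expectations in (\ref{eq:1.5a}) gives $\mathbb E\sigma_0^2=a_0\sum_j\gamma^j<\infty$, so $\gamma<1$.

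For the variance we need more, namely $\rho:=\mathbb E(a_1\xi_0^2+b_1)^2<1$. This is equivalent to $\mathbb E X_0^4<\infty$, which is the standing assumption in the abstract. I would check that this is the intended reading of the hypotheses. If only $X_0\in L_2$ is available, I would instead work with $\mathbb E\sigma_0^4<\infty$, which again follows from (\ref{eq:1.5a}) precisely when $\rho<1$.

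Under $\rho<1$, Minkowski's inequality gives
$$\|\sigma_k^2-\sigma_{k,m}^2\|_2\le a_0\sum_{j>m}\rho^{j/2}=O(\rho^{m/2}).$$
Independence of $\xi_k$ from $\sigma_k$ then gives $\|Y_k-Y_{k,m}\|_2\le \|\xi_0\|_4^2\,O(\rho^{m/2})$.

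\textbf{Step 2 ($m$-dependent CLT).} By Hoeffding--Robbins,
$$T_{n,m}:=n^{-1/2}\sum_{k=1}^n(Y_{k,m}-\mathbb EY_{0,m})\Rightarrow N(0,\tau_m^2),\qquad \tau_m^2=\sum_{|h|\le m}\mathrm{Cov}(Y_{0,m},Y_{h,m}).$$

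\textbf{Step 3 (approximation uniform in $n$; the main obstacle).} We need
$$\limsup_n \mathbb E(T_n-T_{n,m})^2\to0 \quad\text{as } m\to\infty.$$
A plain Minkowski bound on $n^{-1/2}\sum_k(Y_k-Y_{k,m})$ only gives $O(\sqrt n\,\rho^{m/2})$, which is useless. We therefore need decay of the covariances of $D_k:=Y_k-Y_{k,m}-\mathbb E(Y_k-Y_{k,m})$.

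I would write $D_k$ as a sum over $j>m$ of the centered terms
$$\xi_k^2\prod_{i\le j}(a_1\xi_{k-i}^2+b_1)-\text{(their mean)}.$$
The term with index $j$ for $D_k$ and the term with index $j'$ for $D_{k+h}$ are independent unless their windows $[k-j,k]$ and $[k+h-j',k+h]$ overlap.

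A cleaner route, which I would try first, is the martingale-difference (projection) decomposition. Let $P_l=\mathbb E(\cdot\mid\mathcal F_l)-\mathbb E(\cdot\mid\mathcal F_{l-1})$ with $\mathcal F_l=\sigma(\xi_i:i\le l)$. Then
$$\mathbb E\Bigl(\sum_{k=1}^n D_k\Bigr)^2=\sum_l\Bigl\|\sum_k P_lD_k\Bigr\|_2^2\le n\Bigl(\sum_{r\ge0}\sup\|P_{k-r}D_k\|_2\Bigr)^2.$$
Each term $P_{k-r}D_k$ is bounded in $L_2$ by $C\min(\rho^{m/2},\rho^{r/2})$-type quantities, since perturbing $\xi_{k-r}$ only affects the products with $j\ge r$. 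Summing over $r$ gives $O(m\rho^{m/2})\to0$, uniformly in $n$.

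The same projection bound shows that $\sum_h|\mathrm{Cov}(Y_0,Y_h)|<\infty$. Hence $\tau_m^2\to\tau^2=\sum_{h\in\mathbb Z}\mathrm{Cov}(X_0^2,X_h^2)\ge0$.

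\textbf{Step 4 (conclusion).} Billingsley's approximation theorem combines three facts: $T_{n,m}\Rightarrow N(0,\tau_m^2)$, $N(0,\tau_m^2)\Rightarrow N(0,\tau^2)$, and the uniform $L_2$ bound of Step 3. Together they yield $T_n\Rightarrow N(0,\tau^2)$. The case $\tau^2=0$ is included, with degenerate limit.
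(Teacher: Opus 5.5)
Your proof is correct, but it takes a genuinely different route from the paper. The paper argues by association. Through (\ref{eq:1.5a}), each $X_n^2=\sigma_n^2\xi_n^2$ is a coordinatewise nondecreasing function of the independent variables $\xi_i^2$, $i\le n$, so $(X_n^2)$ is associated (Proposition~\ref{prop:3.1}). Newman's CLT for stationary associated sequences then gives the normal limit once $\sum_k\mathrm{Cov}(X_1^2,X_k^2)<\infty$, and the paper simply quotes this summability from Berkes et al.

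You instead truncate (\ref{eq:1.5a}) to an $m$-dependent approximation. You control the remainder uniformly in $n$ through the projection decomposition, with $\|P_{k-r}(Y_k-Y_{k,m})\|_2\le C\rho^{\max(m,r)/2}$, and this checks out. The expansion of $D_k$ into projections is justified because $\bigcap_l\mathcal F_l$ is trivial by Kolmogorov's 0--1 law.

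\textbf{What your route buys.} It is self-contained and never uses monotonicity. It proves the covariance summability (indeed geometric decay) that the paper imports. It identifies $\tau^2=\sum_{h\in\mathbb Z}\mathrm{Cov}(X_0^2,X_h^2)$. It gives the explicit rate $\limsup_n\mathbb E(T_n-T_{n,m})^2=O(m^2\rho^m)$.

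\textbf{What the paper's route buys.} It is shorter, and association is exactly what the paper then uses for the almost sure version (Theorem~\ref{theo:2.2} via Peligrad--Shao). It also handles the augmented GARCH extensions with monotone $\varphi,\Lambda$, where no explicit series or $L_2$ contraction is available. Your method would need extra input there.

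\textbf{Hypotheses.} Your caveat is well taken. $X_0\in L_2$ and $\xi_0\in L_4$ do not give $X_0^2\in L_2$, and both proofs tacitly need $\mathbb E X_0^4<\infty$, i.e.\ your $\rho<1$. Compare the condition $\rho^2<1$ in the Remark following Theorem~\ref{theo:2.1} and the assumption in the abstract.

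\textbf{Citation.} Hoeffding--Robbins as originally stated assumes bounded third absolute moments, which $Y_{k,m}$ has only when $\xi_0\in L_6$. Invoke instead the finite-variance CLT for stationary $m$-dependent sequences, or carry out the Bernstein blocking you mention.
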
 
\begin{remark} It is well known that
$$\tau^2= a_0^2\frac{1+a_1+b_1}{(1-a_1-b_1)^2} \left(\frac{\mathbb E(\xi_0^4)(1+a_1-b_1) + 2b_1}{1-\rho^2}- \frac 1{1-a_1-b_1}\right)$$
where $\rho^2=\mathbb E(a_1\xi_0^2+b_1^2)<1$.
\end{remark}
\begin{theorem}\label{theo:2.2}
Let $(X_n)_{n\in \mathbb Z}$ be a stationary GARCH($1,1$) process, $X_0\in L_2$ and $\xi_0\in L_4$. Then the distributions of the statistics
$$ T_n=\frac 1{\sqrt{n}} \sum_{k=1}^n [X_k^2- \mathbb E(X_0^2)]\qquad (n\ge 1)$$
satisfy the following almost sure limit  for any $t\in \mathbb R$
$$ \lim_{n\to\infty} \left|\mathbb P(T_n\le t) - \frac 1{\log n}\sum_{k=1}^n \frac 1k \mathbbm 1_{\{T_k\le t\}}\right | =0.$$
\end{theorem}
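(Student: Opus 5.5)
Since Theorem \ref{theo:2.1} gives $\mathbb P(T_n\le t)\to \Phi_{\tau}(t)$, the distribution function of $N(0,\tau^2)$ (continuous in $t$ when $\tau^2>0$; for $\tau^2=0$ we restrict to $t\neq 0$ or treat the degenerate case separately), the statement reduces to an almost sure central limit theorem (ASCLT):
$$\frac 1{\log n}\sum_{k=1}^n \frac 1k \mathbbm 1_{\{T_k\le t\}}\to \Phi_\tau(t)\quad \text{a.s.}$$
The plan is to approximate $(X_k^2)$ by a sequence with finite-range dependence, prove the ASCLT for the approximating sums, and control the error along the logarithmic averages. By \eqref{eq:1.5a},
$$X_k^2=a_0\xi_k^2\Bigl(1+\sum_{j\ge1}\prod_{i=1}^j(a_1\xi_{k-i}^2+b_1)\Bigr).$$
Truncating the series at $j\le m$ gives $X_{k,m}^2$, which is a function of $\xi_{k-m},\dots,\xi_k$, so $(X_{k,m}^2)_k$ is $m$-dependent and stationary.

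First, the error bound. Write $\rho^2=\mathbb E(a_1\xi_0^2+b_1)^2<1$ (needed for the fourth moment). Independence of $\xi_k$ from the past gives
$$\|X_k^2-X_{k,m}^2\|_2\le C\rho^{m}.$$
With $T_{n,m}$ the normalized sum of the centered $X_{k,m}^2$, the same geometric decay of covariances yields $\mathbb E|T_n-T_{n,m}|^2\le C'\rho^{m}$ uniformly in $n$. I will take $m=m_k$ growing slowly, say $m_k=\lceil c\log k\rceil$ with $c$ large, so that $\sum_k \frac 1k\,\mathbb E|T_k-T_{k,m_k}|^2<\infty$, and more strongly $\sum_k k^{-1}\rho^{m_k}(\log k)^{2}<\infty$. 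Then $T_k-T_{k,m_k}\to0$ a.s. along the relevant weights, in the sense that the logarithmic average of $\mathbbm 1_{\{|T_k-T_{k,m_k}|>\epsilon\}}$ tends to $0$ a.s. The standard sandwich
$$\mathbbm 1_{\{T_{k,m}\le t-\epsilon\}}-\mathbbm 1_{\{|T_k-T_{k,m}|>\epsilon\}}\le \mathbbm 1_{\{T_k\le t\}}\le \mathbbm 1_{\{T_{k,m}\le t+\epsilon\}}+\mathbbm 1_{\{|T_k-T_{k,m}|>\epsilon\}}$$
together with continuity of $\Phi_\tau$ then transfers the result. A technical nuisance is that the $m$ used in $T_k$ should be fixed along blocks, or one can use a fixed large $m$ and let $m\to\infty$ at the end. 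The latter is cleaner: for fixed $m$, $\limsup$ of the averaged error indicators is at most $\sup_k\mathbb P(|T_k-T_{k,m}|>\epsilon)\le C'\rho^m/\epsilon^2$ plus a fluctuation term handled in the next step.

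Second, the ASCLT for fixed $m$. I will use the standard Lacey–Philipp / Ibragimov–Lifshits route. Replace indicators by bounded Lipschitz $f$ (approximating $\mathbbm 1_{(-\infty,t]}$ from above and below), set $Y_k=f(T_{k,m})-\mathbb Ef(T_{k,m})$, and show
$$\operatorname{Var}\Bigl(\sum_{k\le n}\tfrac1k Y_k\Bigr)\le C(\log n)^{2-\delta}.$$
This gives a.s. convergence along $n_j=\exp(j^{\beta})$ by Borel–Cantelli, and monotonicity of the weights interpolates between the $n_j$. The covariance estimate is the core. For $l\ge 2k+m$, split
$$T_{l,m}=\sqrt{k/l}\,(\text{first }k\text{ terms})+(\text{terms }k+m+1..l)+(\text{$m$ boundary terms}).$$
The middle part is independent of $T_{k,m}$, so the Lipschitz property gives $|\operatorname{Cov}(Y_k,Y_l)|\le C\bigl(\sqrt{k/l}+\sqrt{m/l}\bigr)$ using second moments. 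Summing $\frac1{kl}(k/l)^{1/2}$ gives $O(\log n)$, which is enough. The same argument applies verbatim to the same functional $f$ of the error terms to get the a.s. statement for the indicators of $\{|T_k-T_{k,m}|>\epsilon\}$ (bounded by a Lipschitz function of $T_k-T_{k,m}$, itself decomposable similarly since it is a geometrically weakly dependent sum). Alternatively one can simply bound via $\mathbb E$ and apply the same variance argument to the full sequence directly.

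The main obstacle is this covariance bound for functionals of the non-$m$-dependent pieces. The cleanest way around it is to run the covariance argument directly on $T_k$: decompose $T_l$ into the contribution of $\xi$'s up to index $k$ and the rest, using that $X_j^2$ for $j>k$ depends on $\xi_{\le k}$ only through a factor of size $\rho^{j-k}$ in $L_2$. This gives $|\operatorname{Cov}(f(T_k),f(T_l))|\le C\bigl((k/l)^{1/2}+l^{-1/2}\bigr)$, which avoids the $m$-approximation altogether and is the route I would try first.
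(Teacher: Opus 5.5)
Your proof is correct, but it takes a different route from the paper's.

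\textbf{The paper's route.} The paper gets no quantitative handle on the dependence. It proves in Proposition \ref{prop:3.1} that $(X_n^2)$ is associated, via (\ref{eq:1.5a}): $X_n^2$ is a coordinatewise nondecreasing function of the independent $\xi_i^2$. It imports $\sum_k \mathrm{Cov}(X_1^2,X_k^2)<\infty$ from \cite{BHH}. It then applies the almost sure CLT for associated sequences (Theorem 2 of \cite{PS}) as a black box.

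\textbf{Your route.} You use the causal representation (\ref{eq:1.5a}) directly. For $j>k$ you split $X_j^2$ into a part measurable with respect to $\sigma(\xi_{k+1},\dots,\xi_j)$ and a remainder of $L_2$-size $O(\rho^{j-k})$. This gives
\[
T_l=\sqrt{k/l}\,T_k+V_{k,l}+O_{L_2}(l^{-1/2}),
\]
with $V_{k,l}$ independent of $T_k$. From this you get:
\begin{itemize}
\item the bound $|\mathrm{Cov}(f(T_k),f(T_l))|\le C\bigl((k/l)^{1/2}+l^{-1/2}\bigr)$ for bounded Lipschitz $f$;
\item an $O(\log n)$ bound on the variance of the logarithmic averages;
\item the conclusion, through the usual subsequence, Borel--Cantelli and interpolation argument.
\end{itemize}
This direct argument is complete as you state it. The same decomposition also yields the covariance summability the paper borrows from \cite{BHH}, and the uniform bound on $\|T_k\|_2$ that you need.

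The $m$-dependent detour is unnecessary. In its fixed-$m$ form, controlling the error indicators requires exactly this decomposition anyway. Your variant with $m_k=\lceil c\log k\rceil$ and $\sum_k k^{-1}\mathbb E|T_k-T_{k,m_k}|^2<\infty$ would also work, since the covariance argument tolerates a varying truncation level.

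\textbf{What each approach buys.} The paper's proof is a few lines. It is designed to carry over to the augmented processes of Theorems \ref{theo:2.3}--\ref{theo:2.4}, where only monotonicity of $\varphi$ and summability of covariances are assumed, with no explicit geometric contraction. Your proof is self-contained and gives explicit variance bounds. It uses no monotone structure, so it applies to Bernoulli-shift models with summable $L_2$ coupling coefficients whether or not they are associated.

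\textbf{Moment condition.} Both arguments really need $\rho^2=\mathbb E(a_1\xi_0^2+b_1)^2<1$, i.e.\ $\mathbb E X_0^4<\infty$. You state this explicitly. The theorem's hypothesis $X_0\in L_2$ does not literally imply it, and the paper uses it only implicitly through \cite{BHH}.
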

\begin{theorem}\label{theo:2.3} Let $(X_n)_{n\in \mathbb Z}$ be an augmented GARCH($1,1$) process which satisfies (\ref{eq:1.5}) and 
$$ \tau^2=\sum_{k=1}^\infty \mbox{\rm Cov}(X_1^2, X_k^2)<\infty.$$  Then the distributions of $T_n$ defined in (\ref{eq:2.1}) converge weakly to  the normal distribution with zero expectation and variance $\tau^2$.
\end{theorem}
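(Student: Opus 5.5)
We need a CLT for $X_k^2 = \sigma_k^2 \xi_k^2$ under the augmented recursion. The structure to exploit is that $\sigma_k^2$ is a measurable function of the past innovations $\xi_{k-1},\xi_{k-2},\dots$ only, through the monotone recursion (\ref{eq:1.5}). We therefore plan a positive-association argument, using the classical Newman central limit theorem for associated sequences, which requires only stationarity, association and summability of covariances. The last of these is precisely the hypothesis $\tau^2<\infty$.

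\emph{Step 1: representation of $\sigma_n^2$.} Iterating (\ref{eq:1.5}) backwards gives
$$\Lambda(\sigma_n^2)=\varphi(\varphi(\dots,\xi_{n-2}^2),\xi_{n-1}^2).$$
Since $\varphi$ is coordinatewise increasing and $\Lambda$ is strictly increasing, for a stationary solution $\sigma_n^2=g(\xi_{n-1}^2,\xi_{n-2}^2,\dots)$, where $g$ is coordinatewise nondecreasing. We will justify this by taking $\sigma_n^2$ as the almost sure limit of the iterates started from a fixed initial value; in the GARCH($1,1$) case (\ref{eq:1.5a}) shows this directly. Consequently
$$X_k^2=\xi_k^2\, g(\xi_{k-1}^2,\xi_{k-2}^2,\dots)$$
is a coordinatewise nondecreasing function of the independent nonnegative variables $(\xi_j^2)_{j\le k}$, because $g\ge0$ and $\xi_k^2\ge 0$.

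\emph{Step 2: association.} Independent random variables are associated, and nondecreasing functions of associated variables are associated (Esary--Proschan--Walkup). We first apply this to finitely many coordinates, that is, to truncated iterates started at a fixed point, and then pass to the limit, since association is preserved under almost sure limits. Hence $(X_k^2)_{k\in\mathbb Z}$ is a stationary associated sequence. In particular all covariances $\mathrm{Cov}(X_1^2,X_k^2)$ are nonnegative, so the hypothesis makes the series absolutely convergent. This also gives $X_1^2\in L_2$, since the $k=1$ term $\mathrm{Var}(X_1^2)$ appears in the finite sum.

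\emph{Step 3: Newman's CLT.} For a stationary associated $L_2$ sequence with $\sum_k \mathrm{Cov}(Y_1,Y_k)<\infty$, Newman's theorem states that $n^{-1/2}\sum_{k=1}^n (Y_k-EY_1)$ converges weakly to a centered normal law with variance $\mathrm{Var}(Y_1)+2\sum_{k\ge2}\mathrm{Cov}(Y_1,Y_k)$. Applying this with $Y_k=X_k^2$ gives the claim. The variance constant must then be matched with $\tau^2$ as it is written in the statement, which we read as the long-run variance, i.e.\ the sum over $k\in\mathbb Z$ of $\mathrm{Cov}(X_1^2,X_k^2)$ by stationarity. If a self-contained argument is preferred, Newman's proof can be reproduced: the characteristic-function inequality
$$\Big|E e^{i\sum t_jY_j}-\prod_j Ee^{it_jY_j}\Big|\le \tfrac12\sum_{j\neq l}|t_j t_l|\,\mathrm{Cov}(Y_j,Y_l)$$
is applied to block sums of length $\ell$ separated by gaps; summability of covariances makes the cross-covariances between blocks negligible, and Lindeberg's theorem handles the independent-block approximation.

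The main obstacle is Step 1/2: rigorously identifying the stationary $\sigma_n^2$ as a monotone measurable function of infinitely many past innovations, so that association transfers from the truncated versions to the limit. We will handle this by monotone (or almost sure) convergence of the iterates, and by the standard fact that association is closed under weak limits.
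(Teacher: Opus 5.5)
Your overall strategy, association of $(X_n^2)$ followed by Newman's central limit theorem, is the one the paper uses for Theorems~\ref{theo:2.1}--\ref{theo:2.2} and calls ``similar'' for Theorem~\ref{theo:2.3}. However, Step~1 has a genuine gap.

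The hypotheses on (\ref{eq:1.5}) are only that $\varphi$ is coordinatewise increasing and $\Lambda$ is strictly increasing. They contain no contraction or uniqueness condition. So nothing guarantees that backward iterates from a fixed initial value converge, let alone to the given $\sigma_n^2$. Monotone iteration from the bottom of the state space produces at best the minimal solution, which need not be the given one.

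In fact a stationary solution of (\ref{eq:1.5}) need not be a function of the past at all. Take $\Lambda=\log$ and $\varphi(x,y)=2x+y$. The process $\log\sigma_n^2=-\sum_{j\ge 0}2^{-j-1}\xi_{n+j}^2$ satisfies (\ref{eq:1.5}). For $\xi_0\in L_4$ one has $X_n^2\le \xi_n^2$ and $\mathrm{Cov}(X_1^2,X_k^2)=O(2^{-k})$, so the covariance hypothesis holds. Yet $\sigma_n^2$ is a decreasing function of the \emph{future} innovations. Hence the representation $\sigma_n^2=g(\xi_{n-1}^2,\xi_{n-2}^2,\dots)$ cannot be derived from the stated hypotheses. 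What is really needed is a nonanticipativity assumption: $\sigma_1^2$ is independent of $(\xi_k)_{k\ge1}$. The paper uses this tacitly as well; for GARCH($1,1$) it is supplied by (\ref{eq:1.5a}). You should state it explicitly as a hypothesis.

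Once you have that assumption, the limiting procedure is unnecessary and should be dropped. For each $n$, the vector $(\sigma_1^2,\xi_1^2,\dots,\xi_n^2)$ has independent components and is therefore associated. Iterate $\sigma_k^2=\Lambda^{-1}\bigl(\varphi(\Lambda(\sigma_{k-1}^2),\xi_{k-1}^2)\bigr)$ finitely many times, using a nondecreasing generalized inverse of $\Lambda$. This exhibits every $X_k^2=\sigma_k^2\xi_k^2$, $1\le k\le n$, as a coordinatewise nondecreasing function of that vector; nonnegativity takes care of the product. Hence $(X_1^2,\dots,X_n^2)$ is associated, and stationarity transfers this to every finite subfamily. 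This is exactly the induction in the proof of Proposition~\ref{prop:3.1}, with the affine update replaced by (\ref{eq:1.5}).

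The rest of your argument then goes through: nonnegative covariances, absolute convergence of the series, $X_1^2\in L_2$, and Newman's theorem. Your remark on the variance is correct and worth keeping. Newman's theorem yields $\mathrm{Var}(X_1^2)+2\sum_{k\ge 2}\mathrm{Cov}(X_1^2,X_k^2)$. So the one-sided series defining $\tau^2$ in the statement must be read as the two-sided sum, a discrepancy the paper does not address.
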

\begin{theorem}\label{theo:2.4} Let $(X_n)_{n\in \mathbb Z}$ be an augmented GARCH($1,1$) process as in Theorem \ref{theo:2.3}. Then for any $t\in \mathbb R$
$$ \lim_{n\to\infty} \left|\mathbb P(T_n\le t) - \frac 1{\log n}\sum_{k=1}^n \frac 1k \mathbbm 1_{\{T_k\le t\}}\right | =0\qquad {\rm a.s.}.$$
\end{theorem}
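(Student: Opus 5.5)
Since Theorem \ref{theo:2.3} gives $\mathbb P(T_n\le t)\to\Phi(t/\tau)$ (with the usual convention that the limit is a point mass when $\tau=0$, and then $t\ne 0$), Theorem \ref{theo:2.4} reduces to the almost sure central limit theorem
$$\lim_{n\to\infty}\frac 1{\log n}\sum_{k=1}^n \frac 1k \mathbbm 1_{\{T_k\le t\}}=\Phi(t/\tau)\qquad \text{a.s.}$$
I will prove this with the standard route, the correlation criterion of Ibragimov--Lifshits and Lacey--Philipp. It suffices to show, for every bounded Lipschitz $f$,
$$\mathrm{Var}\Big(\sum_{k=1}^n \tfrac 1k f(T_k)\Big)=O\big((\log n)^{2-\varepsilon}\big).$$
Given this bound, a Borel--Cantelli argument along $n_j=\exp(j^{\gamma})$ and monotonicity between the $n_j$ give a.s.\ convergence of the log averages of $f(T_k)$ to $\mathbb E f(Z)$. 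Sandwiching the indicator between Lipschitz functions then yields the statement at every continuity point $t$.

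The heart of the proof is a decoupling bound: for $k\le l$,
$$|\mathrm{Cov}(f(T_k),f(T_l))|\le C\big((k/l)^{1/2}+\delta_{l-k}\big),$$
with summable or polynomially small $\delta$. To obtain it I use the structure of (\ref{eq:1.5}). For $m>k$, let $\tilde\sigma_m^{(k)}$ be obtained by running the recursion $\Lambda(\sigma_m^2)=\varphi(\Lambda(\sigma_{m-1}^2),\xi_{m-1}^2)$ from a fixed starting value, say $\Lambda(\tilde\sigma^2_{k+1})=$ const, using only $\xi_{k+1},\dots,\xi_{m-1}$. Set $\tilde X_m=\tilde\sigma_m\xi_m$. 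Then $\tilde T_{l}^{(k)}=l^{-1/2}\sum_{m=k+1}^{l}(\tilde X_m^2-\mathbb EX_0^2)$ is independent of $T_k$. Since $f$ is Lipschitz, $|\mathrm{Cov}(f(T_k),f(T_l))|\le 2\|f\|_\infty\, \mathbb E|T_l-\tilde T_l^{(k)}|\wedge\cdot$, and
$$\mathbb E|T_l-\tilde T_l^{(k)}|\le \mathbb E|l^{-1/2}\textstyle\sum_{m\le k}(X_m^2-\mathbb EX_0^2)|+l^{-1/2}\textstyle\sum_{m>k}\mathbb E|X_m^2-\tilde X_m^2|.$$
The first term is $O((k/l)^{1/2})$ by the variance bound $\mathrm{Var}(\sum_{m\le k}X_m^2)=O(k)$, which follows from $\tau^2<\infty$ together with stationarity, as in Theorem \ref{theo:2.3}. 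For the second term I need $\sum_{j\ge1}\mathbb E|\sigma_{k+j}^2-\tilde\sigma_{k+j}^2|<\infty$ uniformly in $k$.

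\textbf{Main obstacle.} The hardest step is establishing this forgetting of the initial condition for a general monotone $\varphi$. Monotonicity of $\varphi$ and $\Lambda$ makes the recursion order preserving. Hence the chains started at the true value and at the fixed value are sandwiched between the chains started at the extreme values, and their difference is monotone in time. The key identity is then
$$\mathbb E(\sigma^2_{k+j}-\tilde\sigma^2_{k+j})=\mathbb E\sigma_0^2-\mathbb E\tilde\sigma_{j}^2,$$
obtained by stationarity and a coupling argument. Its summability is what I expect to extract from $\tau^2<\infty$: the covariances $\mathrm{Cov}(X_1^2,X_k^2)$ are controlled by exactly these coupling differences, since $\xi_k^2$ is independent of the past.

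If only $\sum_j \mathbb E|\sigma^2_{k+j}-\tilde\sigma^2_{k+j}|\,\wedge 1=o(\cdot)$ is obtainable, I will instead truncate the coupling at lag $(\log l)^{A}$ and accept $\delta$ of logarithmic size. That still gives the needed bound $\sum_{k\le l\le n}\frac{1}{kl}\big((k/l)^{1/2}+\delta\big)=O(\log n)+o((\log n)^{2})$. This is sufficient once $\delta_{l-k}$ decays at any rate in $l-k$ beyond $(\log)^{-\varepsilon}$, completing the proof.
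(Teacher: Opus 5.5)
\textbf{Gap: the forgetting bound.} Your reduction to the a.s.\ CLT and your use of the correlation criterion are fine. The step everything rests on is the uniform forgetting bound $\sum_{j\ge1}\mathbb E|\sigma_{k+j}^2-\tilde\sigma_{k+j}^2|<\infty$. You do not prove it, and it does not follow from the hypotheses of Theorem~\ref{theo:2.3}.

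\begin{itemize}
\item Monotonicity of $\varphi$ and $\Lambda$ gives order preservation, not contraction. Already for GARCH($1,1$) the difference of two paths is multiplied at each step by $a_1\xi_{n-1}^2+b_1$. This exceeds $1$ with positive probability for unbounded innovations, so ``their difference is monotone in time'' is false.
\item With an unbounded state space there is no finite upper extremal chain to sandwich with.
\item Your identity $\mathbb E(\sigma^2_{k+j}-\tilde\sigma^2_{k+j})=\mathbb E\sigma_0^2-\mathbb E\tilde\sigma_j^2$ controls only a signed mean. It equals the $L^1$ distance only if the auxiliary chain starts at the bottom of the state space. Its summability then concerns how fast the chain climbs from its minimal state, which the stationary covariances do not see.
\item The link to $\tau^2$ goes the wrong way. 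Since $\tilde\sigma_k$ (started after time $1$) is independent of $X_1$, you have $\mathrm{Cov}(X_1^2,X_k^2)=\mathrm{Cov}(X_1^2,\sigma_k^2-\tilde\sigma_k^2)$. So covariances are bounded \emph{by} coupling distances, whereas you need the converse.
\item Your fallback is also insufficient. A variance bound $o((\log n)^2)$ only gives convergence in probability of the logarithmic averages, not a.s.\ convergence; you need a rate such as $O((\log n)^{2-\varepsilon})$.
\end{itemize}
With an extra contraction hypothesis on $\varphi$ your coupling route would work, but that is not what the theorem assumes.

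\textbf{Missing idea: association.} Association is what turns covariance summability into a decoupling bound, and it is the paper's route. Use the order-preserving structure you noticed to get association, not coupling. Follow the induction of Proposition~\ref{prop:3.1}. At each step adjoin the nondecreasing function $\sigma_n^2=\Lambda^{-1}(\varphi(\Lambda(\sigma_{n-1}^2),\xi_{n-1}^2))$, then the independent variable $\xi_n^2$, then the nonnegative product $X_n^2=\sigma_n^2\xi_n^2$. This shows that $(X_n^2)$ is associated.

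Newman's covariance inequality for associated variables then gives, for Lipschitz $f$,
$|\mathrm{Cov}(f(T_k),f(T_l))|\le \mathrm{Lip}(f)^2 (kl)^{-1/2}\sum_{i\le k}\sum_{j\le l}\mathrm{Cov}(X_i^2,X_j^2)\le C\sqrt{k/l}$.
This uses only $\tau^2<\infty$. It is exactly your ``heart'' estimate with $\delta\equiv 0$, and it needs no coupling at all. This is the content of Peligrad--Shao's a.s.\ CLT for stationary associated sequences with summable covariances, which the paper invokes; it pairs it with Newman's CLT for Theorem~\ref{theo:2.3}.
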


\section{Association}\label{sec:3}

The proofs of the theorems in Section \ref{sec:2} rely on association. For completeness we recall basic properties of associated processes. As a general reference we refer to \cite{PR}. 
\begin{definition}\label{def:3.1} 
\begin{enumerate}
\item A finite collection of random variables $\{X_1, X_2,..., X_n\}$ is called associated if 
\begin{equation}\label{eq:3.1}
\mbox{\rm Cov}(f(X_1,X_2,...,X_n),g(X_1,X_2,...,X_n)) \ge 0.
\end{equation}
 for any pair of measurable functions $f,g:\mathbb R^n\to\mathbb R$ which are coordinatewise nondecreasing, provided the covariances are well defined.
 \item A process is called associated if any finite subcollection of random variables is associated.
 \end{enumerate}
\end{definition}
\begin{remark}\label{rem:3.1}
We need the following assertions on associated random variables.
\begin{enumerate}
\item A sequence of independent random variables is associated
(see \cite{PR}, Section 1.1, page 3).
\item Non-decreasing functions of associated random variables are associated (\cite{PR}, Section 1.2, page 4).
\item A countable family of finite  sums of associated random variables is associated. (see \cite{PR}, Section 1.1). Then, by (2), the same assertion holds for finite products provided the associated sequence is strictly positive.
\item Let $\{X_1, X_2,..., X_n\}$ be an associated collection of random variables, and let $f_i(x_1,x_2,...,x_n)$, $ i \in \{1,2,...,m\}$, be coordinate-wise increasing measurable functions, then the collection $\{f_i(X_1,X_2,...,X_n):\ 1\le i\le m\}$ is a collection of associated random variables. This is shown by direct computation.
\end{enumerate}
\end{remark}

\begin{proof}[ Proof of Theorems \ref{theo:2.1}--\ref{theo:2.4}]

First observe that for $\mu=EX_0^2$
$$\sum_{k\ge 1} E(X_1^2-\mu)(X_k^2-\mu) <\infty.$$
This has been shown in \cite{BHH}.
It follows that Theorem \ref{theo:2.1} holds by Newmans central limit theorem for associated random sequences (see \cite{NW}) provided it can be shown that the sequence is associated. 

Under the same condition Theorem \ref{theo:2.2}  follows from Theorem 2 in \cite{PS}.

In order to complete the proof of the first two theorems it is left to show that the sequence $X_n^2$, $n\in \mathbb Z$, is associated (so $X_n^2-\mu$ is associated by Remark \ref{rem:3.1}(2)).

\begin{proposition}\label{prop:3.1} 
A stationary GARCH($1,1$) process as in Theorems \ref{theo:2.1} or \ref{theo:2.2} is associated.
\end{proposition}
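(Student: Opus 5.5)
The claim to be shown is that the sequence $(X_n^2)_{n\in\mathbb Z}$ is associated; $X_n$ itself need not be, and only the squares are needed for the proofs of Theorems \ref{theo:2.1} and \ref{theo:2.2}. The plan is to write every $X_n^2$ as a coordinatewise nondecreasing function of the i.i.d. family $(\xi_k^2)_{k\in\mathbb Z}$, and then pass to limits. By (\ref{eq:1.2}) and (\ref{eq:1.5a}),
$$X_n^2=\sigma_n^2\xi_n^2= a_0\,\xi_n^2\Bigl(1+\sum_{j=1}^\infty\prod_{i=1}^j (a_1\xi_{n-i}^2+b_1)\Bigr).$$
Since $a_0,a_1,b_1\ge 0$, each summand is a nondecreasing function of the nonnegative variables $\xi_{n-1}^2,\dots,\xi_{n-j}^2$, and multiplying by $\xi_n^2\ge 0$ preserves monotonicity in every coordinate.

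\textbf{Step 1 (truncation).} Fix $N$ and, for $m\in\mathbb N$, put
$$Y_n^{(m)}= a_0\,\xi_n^2\Bigl(1+\sum_{j=1}^m\prod_{i=1}^j (a_1\xi_{n-i}^2+b_1)\Bigr),\qquad |n|\le N .$$
The variables $\xi_k^2$ with $|k|\le N+m$ are independent, hence associated by Remark \ref{rem:3.1}(1). Each $Y_n^{(m)}$ is a coordinatewise nondecreasing measurable function of these finitely many variables on $[0,\infty)^{2N+m+1}$. Remark \ref{rem:3.1}(4) therefore shows that $\{Y_n^{(m)}:|n|\le N\}$ is associated. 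Alternatively, one can combine Remark \ref{rem:3.1}(2),(3) for products and sums of positive associated variables.

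\textbf{Step 2 (limit).} As $m\to\infty$, $Y_n^{(m)}\uparrow X_n^2$ almost surely, since the series (\ref{eq:1.5a}) converges a.s. by the stationarity condition (\ref{eq:1.4}). Association is preserved under a.s. (indeed distributional) limits; this is a standard fact in \cite{PR}. To prove it directly, it suffices to test (\ref{eq:3.1}) on bounded continuous coordinatewise nondecreasing $f,g$, because indicators of upper orthants are monotone limits of such functions and these determine association. For bounded continuous $f,g$ one has
$$\mathrm{Cov}\bigl(f(Y^{(m)}),g(Y^{(m)})\bigr)\to \mathrm{Cov}\bigl(f(X^2),g(X^2)\bigr)$$
by dominated convergence, and nonnegativity passes to the limit. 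Since $N$ is arbitrary, every finite subcollection of $(X_n^2)$ is associated, and so is the process. Remark \ref{rem:3.1}(2) then gives the same conclusion for $(X_n^2-\mu)$.

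\textbf{Main obstacle.} The hard part is the limiting step. Definition \ref{def:3.1} quantifies over all monotone $f,g$ with defined covariance, not only bounded continuous ones. One must therefore justify the reduction to bounded continuous test functions, either by citing \cite{PR} or by an approximation argument: truncate $f,g$ to make them bounded, then approximate them monotonically by continuous nondecreasing functions, using integrability from $X_0\in L_2$. The remaining steps are routine monotonicity checks.
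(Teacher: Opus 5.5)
Your argument is correct. Your reading of the statement as association of $(X_n^2)$ is also the one the paper proves and uses; in fact $(X_n)$ itself is generally not associated, since $X_1,X_2$ are uncorrelated but dependent, and uncorrelated associated variables in $L_2$ are independent. Your route does differ from the paper's.

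\textbf{How the paper argues.} It opens with essentially your argument, but applied to $(\sigma_n^2)$ via (\ref{eq:1.5a}). There the passage to the infinite sum is tacitly charged to Remark \ref{rem:3.1}(3), which is the same limit step you make explicit. The induction that follows does not need that part. The induction is finite. Since $\sigma_1^2$ depends only on $\xi_k$, $k\le 0$, the family $\{\sigma_1^2,\xi_1^2,\dots,\xi_n^2\}$ is independent. The variables $\sigma_k^2$ and $X_k^2=\sigma_k^2\xi_k^2$ ($k\le n$) are then obtained by alternately adjoining an independent $\xi_k^2$ and nondecreasing functions, via $\sigma_k^2=a_0+a_1X_{k-1}^2+b_1\sigma_{k-1}^2$. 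Stationarity then covers every finite window.

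\textbf{Comparison.} The paper's route needs no closure of association under limits. It also transfers directly to the augmented recursion (\ref{eq:1.5}), where there is a monotone recursion but no explicit series. Your route exhibits every $X_n^2$ at once as an increasing functional of the i.i.d.\ $(\xi_k^2)$, which is transparent. The price is that you need the a.s.\ convergence of (\ref{eq:1.5a}) and the limit theorem for association. You could drop Step 2 entirely with the paper's device: $\sigma_{-N}^2$ is independent of $(\xi_{-N},\dots,\xi_N)$, and the finite recursion writes each $X_n^2$, $|n|\le N$, as a nondecreasing function of $(\sigma_{-N}^2,\xi_{-N}^2,\dots,\xi_N^2)$.

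\textbf{A wrong justification in Step 2.} Indicators of upper orthants do \emph{not} determine association. The fact you need (testing on bounded continuous nondecreasing functions suffices) is standard (Esary--Proschan--Walkup), so the proof survives. For a counterexample, put mass $1/4$ at each of $111$, $100$, $010$ in $\{0,1\}^3$, mass $1/5$ at $001$ and mass $1/20$ at $000$. All covariances between the orthant indicators $\prod_{i\in S}X_i$ are nonnegative, yet $\mathrm{Cov}(X_1\vee X_2,X_3)=\frac14-\frac34\cdot\frac{9}{20}<0$.

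The correct reduction uses indicators of arbitrary increasing Borel sets $A$, obtained from a layer-cake decomposition of a bounded nondecreasing $f$.
\begin{itemize}
\item By inner regularity, choose a compact $K\subseteq A$ with $P(A\setminus K)<\varepsilon$.
\item Then $B=K+[0,\infty)^d\subseteq A$ is closed and increasing.
\item The distance to an increasing set is nonincreasing, so $x\mapsto\max\bigl(0,1-k\,\mathrm{dist}(x,B)\bigr)$ is continuous and nondecreasing, and it decreases to $\mathbbm 1_B$.
\end{itemize}
This gives $L^1(P)$-approximation of $\mathbbm 1_A$ by bounded continuous nondecreasing functions, which is all the limit step requires.
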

\begin{proof}
We first show that $\sigma_n^2$ form an associated sequence.

Notice that the square of the innovation processes $(\xi_n^2)_{n\in \mathbb Z}$ are independent, hence by Remark \ref{rem:3.1}, also associated. Since $a_1>0$, the function $f(x)=a_1x+b_1$ is increasing, thus the collection $\{a_1\xi_n^2 + b_1: n \in \mathbb Z\}$ is associated . The partial products of an associated collection of positive random variables are also associated by Remark \ref{rem:3.1}(2), meaning that $\{ \prod_{i=1}^j(a_1\xi_{n-i}^2 + b_1) : n\in \mathbb Z, j \in \mathbb N\}$ is an associated collection of random variables. Since $\mathbb E[\sum_{j=1}^\infty\prod_{i=1}^j(a_1\xi_{n-i}^2 + b_1)] < \infty$, the collection  $\{\sum_{j=1}^\infty\prod_{i=1}^j(a_1\xi_{n-i}^2 + b_1): n\in \mathbb Z\}$ is associated by Remark \ref{rem:3.1}(3). As $a_0> 0$, it follows easily that  $\{\sigma_n^2: n\in \mathbb Z\}$ is associated.

We next show by induction that the family $\{(\sigma_n,X_n^2): n\in \mathbb N\}$ is associated.

For $n=1$ we have that $\sigma_1^2$ is measurable  with respect to the $\sigma$-field generated by all $\xi_k$ with $k\le 0$ (cf. (\ref{eq:1.5a}). Therefore  $\sigma_1^2$ and $\xi_1^{2}$ are independent, hence associated. The functions
$$f(x,y)= xy\mathbbm 1_{\{x,y\ge 0\}}\quad g(x,y)=x \quad h(x,y)=y$$ are coordinatewise nondecreasing on $\mathbb R^2$, so by Remark \ref{rem:3.1}(4) 
$$(\xi_1^{2},\sigma_1^2,X_1^2)=(g(\xi_1^2,\sigma_1^2),h(\xi_1^2,\sigma_1^2),f(\xi_1^2,\sigma_1^2))$$ 
is associated.

Assume now that the family $\{\xi_k^2,\sigma_k^2, X_k^2: 1\le k\le n-1\}$ is associated.

We shall show that this holds for the the family $\{\xi_k^2,\sigma_k^2, X_k^2: 1\le k\le n\}$. We proceed similar to the case $n=1$. The function 
$$f(x_1^2,...,x_{n-1}^2,y_1^2,...,y_{n-1}^2,z_1^2,...,z_{n-1}^2)=
a_0+a_1y_{n-1}^2+bz_{n-1}^2$$
is nondecreasing. Therefore 
$$\{\xi_k^2,\sigma_k^2, X_k^2: 1\le k\le n-1\}\cup 
\{ f(\xi_1^2,...,\xi_{n-1}^2,\sigma_1^2,...,\sigma_{n-1}^2, X_1^2,...,X_{n-1}^2)=\sigma_n^2\}$$
is associated. Moreover, since $\xi_n^2$ is independent of this family we can add it without changing associativity. Finally, we are able to add $\sigma_n^2\xi_n^2$ to conclude the induction step.
\end{proof}

The proof of Theorem \ref{theo:2.3} and \ref{theo:2.4} are similar and left to the reader.
\end{proof}

\section{Resampling}\label{sec:4}

This section extends and adds to the previous results in as much as it provides a continuum of estimation procedures, essentially parametrized by a parameter $1<\mathfrak{p}<2$. They have different properties as  the size and the cover probabilities of the confidence intervals  decrease as $\mathfrak{p}$ increases. The method is based on resampling with a standard stable distribution of order $\mathfrak{p}$.

\subsection{Stable integrals}
We begin recalling the construction in \cite{RW} to obtain {\it stable integrals}
$$ \int_0^t \mathbb X(\omega,s)dM(\omega,s).$$

Let $(\Omega,\mathcal F,\mathbb P)$ be a probability space and $\mathcal F_t\subset \mathcal F$ ($t>0$) be a right continuous increasing family of $\mathbb P$-complete sub-$\sigma$-fields of $\mathcal F$. For $0<{\mathfrak p}<2$ an $(\mathcal F_t)_{t>0}$-adapted process $(M(\cdot,t))_{t\ge 0}$ is called a ${\mathfrak p}$-stable motion if
\newline(1) For all $0\le s<t$ and $y\in \mathbb R$
$$ \mathbb E\{\exp(iy(M(t)-M(s)))|\mathcal F_s\}= \exp(-(t-s)|y|^{\mathfrak p}).$$
(2) For all $\omega\in \Omega$ the path 
$$ \mathbb R_+\ni t\mapsto M(\omega,t)$$
belongs to $D([0,\infty))$.

The space $L^{\mathfrak p}(L^{\mathfrak p})$ is defined as the space of all real valued measurable processes $\mathbb X=(X(\cdot,t))_{t\ge 0}$ on $\Omega\times [0,\infty)$ which are adapted to $(\mathcal F_t)_{t\ge 0}$ and satisfy for each $T>0$
$$ \|\mathbb X\|_{{\mathfrak p},T}= \left( \mathbb E\int_0^T|\mathbb X(\cdot,s)|^{\mathfrak p} ds\right)^{1/{\mathfrak p}} <\infty.$$
A {\it simple process} has the form
$$ \mathbb X(\omega,t)= \phi_0(\omega)\mathbbm 1_{\{0\}}(t) +\sum_{k=0}^\infty\phi_k(\omega) \mathbbm 1_{(t_k,t_{k+1}]}(t),$$
where $0=t_0<t_1<...<t_k<...\to\infty$ and $\phi_k$ is $\mathcal F_{t_k}$-measurable for each $k\ge 0$.  Its integral with respect to $M(t)$ is the defined by
$$\int_0^t \mathbb X(\omega,s) dM(\omega,s)= \sum_{k=0}^{n-1}\phi_k(\omega)(M(\omega,t_{k+1})-M(\omega,t_k))+ \phi_n(\omega)(M(\omega,t)-M(\omega,t_n))$$
where $t_n\le t<t_{n+1}$ determines $n$. This integral is clearly a process with sample paths in $D([0,\infty))$.

Next let $\Lambda^{\mathfrak p}(L^\infty)$ denote the space of all $(\mathcal F_t)$-adapted processes $\mathbb Y(\cdot,t)$, $t\ge 0$, with almost all sample paths in $D([0,\infty))$  such that for every $T>0$ the weak $L^{\mathfrak p}$-norm
$$ \left(\sup_{\lambda>0}\lambda^{\mathfrak p} \mathbb P\left\{\sup_{t\le T}|Y(\cdot,t)|>\lambda\right\}\right)^{1/{\mathfrak p}}<\infty.$$

It has been shown in \cite{RW}, Theorem 2.1, that 
$$ \mathbb X\mapsto \int \mathbb X dM$$
defined for simple processes $\mathbb X$ extends to an isomorphic embedding of $L^{\mathfrak p}(L^{\mathfrak p})$ into $\Lambda^p(L^\infty)$. Thus for $\mathbb X\in L^{\mathfrak p}(L^{\mathfrak p})$  the integral $\int \mathbb X dM\in \Lambda^{\mathfrak p}(L^\infty)$ is well defined.
In particular, we shall write $I(h):=\int h(\mathbb X) dM$ to denote the integral when $h$ is a non-random function $h:[0,1]\to\mathbb R$ in $L^{\mathfrak p}([0,1])$.

\subsection{Resampling of GARCH processes}

Let $(X_n)_{n\in \mathbb Z}$ be a GARCH process as before with innovation process $(\xi_n)_{n\in \mathbb Z}$ and constants $a_0$, $a_1,...,a_p$ and $b_1,..., b_q$.
Let $F_X$ denote the continuous distribution function of the variables $X_n$, which is independent of $n$  since the process is stationary and ergodic. Then $F_X^{-1}(U)$ has the same distribution as $X_n$ for every $n$ if $U$ has uniform distribution. Hence $U_n= F_X(X_n)$ is a stationary process with uniform one dimensional marginals. 

Denote by $(Y_n)_{n\ge 1}$ a centered stable process of order $\mathfrak{p}\in (1,2)$ with independent $Y_i$, which in addition is also independent of $(X_n)_{n\in \mathbb Z}$.
The resampled  empirical process is defined by
$$ F_n(x):=\frac 1{n^{1/\mathfrak{p}}}\sum_{i=1}^n \mathbbm 1_{\{F_X(X_i)\le x\}} Y_i
\qquad x\in [0,1].$$
Define $\mathcal F_t$ to be the $\sigma$-field generated   by the random variables $F_n(s)$ for $n\ge 1$ and $s\le t$.
\begin{theorem}\label{theo:4.1} There exist a stable motion $M(\cdot,t)$ ($0\le t\le 1$) such that $F_n$ converges weakly to $M$ in the Skorohod topology on $D([0,1])$.
\end{theorem}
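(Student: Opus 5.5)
The approach is the usual one for weak convergence in $D([0,1])$: first identify the limit through finite dimensional distributions, then prove tightness.

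\textbf{Step 1: finite dimensional distributions.} Fix $0\le x_1<\dots<x_m\le 1$ and real $y_1,\dots,y_m$. We condition on the whole sequence $(X_n)$, which is independent of $(Y_i)$. Write $h(u)=\sum_j y_j\mathbbm 1_{\{u\le x_j\}}$. Then
$$\sum_j y_jF_n(x_j)=n^{-1/\mathfrak p}\sum_{i=1}^n h(U_i)Y_i ,\qquad U_i=F_X(X_i).$$
We normalise $Y_i$ so that $\mathbb E e^{iyY_i}=e^{-|y|^{\mathfrak p}}$; if $Y_i$ has a skewed stable law, the argument is the same with the corresponding characteristic exponent. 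The conditional characteristic function is then
$$\exp\Big(-\frac1n\sum_{i=1}^n|h(U_i)|^{\mathfrak p}\Big).$$
The GARCH process is a measurable function of the i.i.d. innovations, so it is stationary and ergodic, and hence so is $(U_i)$. Birkhoff's theorem gives
$$\frac1n\sum_{i=1}^n|h(U_i)|^{\mathfrak p}\to\int_0^1|h(u)|^{\mathfrak p}\,du \quad\text{a.s.},$$
because $U_0$ is uniform; this uses continuity of $F_X$. Dominated convergence then gives
$$\mathbb E\, e^{i\sum_j y_jF_n(x_j)}\to\exp\Big(-\int_0^1|h|^{\mathfrak p}\,du\Big).$$
This is the characteristic function of $\sum_j y_jM(x_j)=I(h)$, where $M$ is the $\mathfrak p$-stable motion of the previous subsection with $\mathbb E e^{iy(M(t)-M(s))}=e^{-(t-s)|y|^{\mathfrak p}}$ and independent increments. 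The limit $M$ is therefore independent of the dependence structure of $(X_n)$; only the uniform marginal and ergodicity are used. This also gives the existence claim in the theorem: $M$ is constructed directly, for instance as a Lévy process, and it has a version in $D([0,1])$.

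\textbf{Step 2: tightness.} This is the main obstacle. Stable increments have no second moments, so the classical Billingsley moment criterion $\mathbb E|F_n(t)-F_n(s)|^2|F_n(u)-F_n(t)|^2\le C(u-s)^{2}$ is unavailable. I would try two routes.

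The first route works conditionally on $(X_n)$. Given the $X$'s, $F_n$ is a process with independent symmetric stable increments indexed by the order statistics of $U_1,\dots,U_n$. It equals in law
$$M\big(G_n(x)\big),\qquad G_n(x)=\frac1n\sum_{i=1}^n\mathbbm 1_{\{U_i\le x\}},$$
where $M$ is a $\mathfrak p$-stable motion independent of $X$. This identity holds exactly for symmetric $\alpha$-stable $Y_i$, by scaling and independence: $n^{-1/\mathfrak p}\sum_{i\in A}Y_i\overset{d}{=}M(|A|/n)$ increment-wise. The argument then reduces to the Glivenko–Cantelli theorem for stationary ergodic sequences, which gives $\sup_x|G_n(x)-x|\to0$ a.s. Path continuity of $M$ fails, but $M$ is continuous at each fixed time a.s. The map $(g,\omega)\mapsto M(\omega,g(\cdot))$ from nondecreasing $g$ into $D$ is continuous in the Skorohod $J_1$ topology at $g=\mathrm{id}$: we can choose a time change $\lambda_n$ close to the identity with $G_n\circ\lambda_n$ close to the identity, and uniformly close away from jumps, with jumps handled by matching. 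So $M\circ G_n\to M$ a.s. in $D$, and this gives weak convergence.

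The second route is a fallback if the representation is only approximate, for non-symmetric $Y$. There I would apply the tightness criterion for sums of independent increments: $\sup_n\mathbb P(\sup_x|F_n(x)|>\lambda)\to0$ by Lévy's maximal inequality, and the modulus $w'_{F_n}(\delta)$ is controlled by the stable tail bound $\mathbb P(|F_n(t)-F_n(s)|>\epsilon)\le C\epsilon^{-\mathfrak p}(G_n(t)-G_n(s))$ together with Glivenko–Cantelli.
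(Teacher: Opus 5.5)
Your argument is correct. Step 1 is essentially what the paper does: the paper reuses the finite dimensional computation of \cite{DDW} with Birkhoff's theorem in place of the i.i.d.\ law of large numbers, exactly as in your conditional characteristic function argument.

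The genuine difference is in Step 2. The paper gets tightness by rerunning the tightness argument of Lemma 3.2 in \cite{DDW}, again with the ergodic theorem replacing the law of large numbers. Your first route needs no tightness criterion at all. Conditionally on $(X_n)$, both $F_n$ and $M\circ G_n$ are step processes with the same jump locations and the same joint law of jump sizes. Hence $F_n\overset{d}{=}M\circ G_n$, with $M$ independent of $(X_n)$. The ergodic Glivenko--Cantelli theorem together with continuity of composition at $(M,\mathrm{id})$ then gives $M\circ G_n\to M$ almost surely in $J_1$. For that continuity you can cite Billingsley's random time change lemma, or argue directly. Off the interval $[U_{(n)},1)$, $M\circ G_n$ coincides with the discretization $M(\lfloor n\cdot\rfloor/n)$, composed with the piecewise linear time change $\lambda(U_{(k)})=k/n$. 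Here $\|\lambda-\mathrm{id}\|_\infty\le\sup_x|G_n(x)-x|$, and the discretization converges in $J_1$ for every c\`adl\`ag path. On $[U_{(n)},1)$ the discrepancy is $|M(1)-M(1-1/n)|$, so the only fixed-time continuity you actually use is $M(1-)=M(1)$ a.s.

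Your route buys three things: an explicit coupling with almost sure convergence, Step 1 as a corollary, and a transparent proof that only the uniform marginals and ergodicity of $(U_i)$ matter, whatever the GARCH dependence. Its price is that it rests on the exact scaling identity $n^{-1/\mathfrak p}\sum_{i\in A}Y_i\overset{d}{=}(|A|/n)^{1/\mathfrak p}Y_1$, so it is tied to exactly stable multipliers. The paper's route instead recycles an existing tightness lemma with minimal change.

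A small remark on your fallback. The identity above holds for every strictly stable law, in particular for every centered stable law with $\mathfrak p\in(1,2)$, with $M$ replaced by the corresponding strictly stable L\'evy motion. So the second route is never needed in this setting. As sketched it is also incomplete: a tail bound for a single increment does not control $w'_{F_n}(\delta)$. You would need the product bound for adjacent increments, which conditional independence provides, plus a criterion that tolerates the jumps of $G_n$.
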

\begin{proof} The proof follows in the same way a the corresponding one in \cite{DDW} just using the ergodic theorem for the $X_n$ instead of the law of large numbers for i.i.d. sequences. In particular this applies to page 7  in \cite{DDW} to show the convergence of finite dimensional distributions and to Lemma 3.2 for the tightness argument.
\end{proof}

Its discrete analog to $I(h)$ with respect to the resampled empirical process $F_n$ is defined as
$$ I_n(h):=\int h(F_X^{-1}(u)) dF_n(u)= \frac 1{n^{1/{\mathfrak p}}}\sum_{k=1}^n h(X_i)Y_i.$$ 

In what follows we take $h\circ F_X^{-1}-EX_1^2\in L^r([0,1])$ for some $r>\mathfrak p$ where $h(x)=x^2$.

\begin{theorem}\label{theo:4.2} Let $(Y_n)_{n\ge 1}$ have a stable distribution with Fourier transform  $\exp\{|t|^\mathfrak{p}$, $t\in \mathbb R$. For the processes $(X_n)_{n\in \mathbb Z}$ as above, it holds:
$$ \lim_{n\to\infty} d(\mathcal L(I_n(h)),\mathcal L(I(h\circ F_X^{-1}))) =0$$
where $d$ stands for a metric of weak convergence, and 
$$ \lim_{N\to\infty} \frac 1{\log N}\sum_{n=1}^\infty \frac 1n \mathbbm1_{\{ I_n(h)\le x\}} = \mathbb P(I(h\circ F_X^{-1})\le x)\qquad \mbox{\rm a.s.}.$$
\end{theorem}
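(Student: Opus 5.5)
Both statements can be handled by conditioning on the GARCH path and using the stability of the $Y_i$. Since $(Y_i)$ is i.i.d. symmetric $\mathfrak p$-stable with characteristic function $\exp\{-|t|^{\mathfrak p}\}$ and independent of $(X_n)$, we have, conditionally on $(X_n)$,
$$\mathbb E\bigl[e^{itI_n(h)}\,\big|\,(X_k)\bigr]=\exp\Bigl\{-|t|^{\mathfrak p}\,\frac 1n\sum_{k=1}^n |h(X_k)|^{\mathfrak p}\Bigr\}.$$
The limit variable satisfies $\mathbb E e^{itI(g)}=\exp\{-|t|^{\mathfrak p}\int_0^1|g(u)|^{\mathfrak p}du\}$ for deterministic $g\in L^{\mathfrak p}([0,1])$; this follows from the definition of stable motion on step functions and the isometry in \cite{RW}, Theorem 2.1. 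With $g=h\circ F_X^{-1}$,
$$\int_0^1|g|^{\mathfrak p}=\mathbb E|h(X_0)|^{\mathfrak p}<\infty,$$
which holds by the integrability assumption $h\circ F_X^{-1}-EX_1^2\in L^r$ with $r>\mathfrak p$. The process $(X_n)$ is stationary and ergodic, being a measurable function of the i.i.d. innovations via (\ref{eq:1.5a}). Birkhoff's ergodic theorem then gives $\frac1n\sum_{k\le n}|h(X_k)|^{\mathfrak p}\to \mathbb E|h(X_0)|^{\mathfrak p}$ almost surely. Hence the conditional characteristic functions converge a.s. for every $t$, and by dominated convergence the unconditional ones converge. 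Lévy's continuity theorem gives the first assertion, and in fact the stronger quenched statement that $\mathcal L(I_n(h)\mid (X_k))\to\mathcal L(I(h\circ F_X^{-1}))$ for a.e. path. Alternatively, one could deduce this from Theorem \ref{theo:4.1} by the continuous mapping theorem, approximating $h\circ F_X^{-1}$ by step functions. The direct route avoids the unboundedness of $h$.

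For the almost sure CLT I would work conditionally, i.e. fix a typical path $(x_k)$ of the GARCH process. Given the path, $I_n=n^{-1/\mathfrak p}\sum_{k\le n}c_kY_k$ with $c_k=h(x_k)$ is a weighted sum of independent stable variables, and its law is exactly stable with scale $s_n=(\frac1n\sum_{k\le n}|c_k|^{\mathfrak p})^{1/\mathfrak p}\to s$. I would follow the standard Lacey--Philipp/Berkes route for ASCLTs. Take a bounded Lipschitz $f$ and set $\zeta_k=f(I_k)-\mathbb E_Y f(I_k)$. It then suffices to show
$$\frac1{\log N}\sum_{k\le N}\frac1k\zeta_k\to0 \quad\text{a.s. in }Y.$$
By a Gál--Koksma type criterion, this follows from the covariance bound
$$|\mathrm{Cov}_Y(f(I_k),f(I_l))|\le C\Bigl(\frac kl\Bigr)^{\gamma},\qquad k\le l,$$
for some $\gamma>0$, which gives $\mathrm{Var}(\sum_{k\le N}\zeta_k/k)=O(\log N)$. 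Combined with $\mathbb E_Yf(I_k)\to\mathbb E f(I(h\circ F_X^{-1}))$ from the first part and Toeplitz averaging, the logarithmic averages converge. A countable family of $f$ plus continuity of the stable limit distribution function yields the indicator version for all $x$. Fubini over the path then gives the result $\mathbb P$-a.s.

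The main obstacle is the covariance bound. I would write $I_l=(k/l)^{1/\mathfrak p}I_k+R_{k,l}$, where $R_{k,l}=l^{-1/\mathfrak p}\sum_{i=k+1}^l c_iY_i$ is independent of $I_k$ under $\mathbb P_Y$. Lipschitz continuity gives
$$|\mathrm{Cov}(f(I_k),f(I_l))|\le 2\|f\|_\infty\,\mathrm{Lip}(f)\,(k/l)^{1/\mathfrak p}\,\mathbb E_Y|I_k|.$$
Since $\mathfrak p>1$, $\mathbb E_Y|I_k|=C_{\mathfrak p}s_k$, and this is bounded along the fixed path because $s_k\to s$. So $\gamma=1/\mathfrak p$ works. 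It is precisely this step where $\mathfrak p>1$ and the ergodic control of $s_k$ are used.
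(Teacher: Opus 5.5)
Your proof is correct, but it is organized differently from the paper's. The paper works through Theorem~\ref{theo:4.1}. It obtains $I_n(h_1)\to I(h_1)$ for step functions from $F_n\Rightarrow M$ and the independent increments of $M$, and extends this to $h$ using the uniform bound $\|I_n(h_1)\|_{L_p}\le C\|h_1\|_{L_r}$ taken from \cite{DDW}; this is where $r>\mathfrak p$ is used. For the almost sure part, the paper invokes the ASCLT of \cite{BD} for independent summands, but only for step functions $h_2$. There it uses your scaling identity, namely that $I_n(h_2)$ has the law of $\bigl(\frac1n\sum_i|h_2(X_i)|^{\mathfrak p}\bigr)^{1/\mathfrak p}Z_1$. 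It then reaches general $h$ by sandwiching $h\le h_2$ and claiming monotonicity of the logarithmic averages. You apply the conditional stability identity to $h$ itself, so you need neither the functional limit theorem, nor the $L_p$-bound, nor any approximation of the unbounded $h$. The ergodic theorem alone gives $s_n\to s$ a.s., which yields a quenched limit law, stronger than the annealed statement. Your decomposition $I_l=(k/l)^{1/\mathfrak p}I_k+R_{k,l}$ then replaces the citation of \cite{BD} with a self-contained G\'al--Koksma argument, finished by Fubini. Your route needs only $E|h(X_0)|^{\mathfrak p}<\infty$. It also sidesteps the paper's sandwich step, which is delicate: the $Y_i$ take both signs, so $h\le h_2$ does not order $I_n(h)$ and $I_n(h_2)$. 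In return, the paper's route identifies the limit as the stochastic integral against the stable motion $M$ arising from $F_n$ in Theorem~\ref{theo:4.1}. For the distributional statements themselves, you only need the law of $I(g)$ for deterministic $g$, which you correctly obtain from \cite{RW}.
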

\begin{proof} For $h_0\circ F_X^{-1}= \mathbbm 1_{[\alpha,\beta]}$, $0\le \alpha <\beta\le 1$, we have
$$I_n(h_0)=\frac 1{n^{1/{\mathfrak p}}}\sum_{k=1}^n \mathbbm 1_{ [\alpha,\beta]}(F(X_k)) Y_k\to M(\beta)-M(\alpha)= \int \mathbbm 1_{[\alpha,\beta]}(t) dM(t).$$
For simple functions $h_1(x)=\sum_{l=1}^L c_l \mathbbm 1_{[\alpha_l,\alpha_{l+1})}(x)$ the same argument together with the independence of increments of  $M$ shows that  $I_n(h_1)\to I(h_1)$.
In order to show the first part note that Theorem 2.1 in \cite{DDW} holds as well in the present context so that there is a constant $C$ so that for $n\ge 1$
$$ \|I_n(h_1)\|_{L_p(\mathbb P)}\le C \|h_1\|_{L_r([0,1])}.$$

The second part uses Theorem 1 in \cite{BD} for independent processes which says that it is sufficient to 
show that $\sup_{n\ge 1} E|I_n(h_2)| <\infty$.  Since the process $(X_n)_{n\in \mathbb Z}$ is only stationary it cannot be applied directly. However, for a function of the form
$$ h_2(x)=\sum_{k\in \mathbb Z} (c_k^2-EX_1^2) \mathbbm 1_{[\alpha_k,\alpha_{k+1})}(x)$$
where $c_k\in \mathbb R$ and $\alpha_k<\alpha_{k+1}$ defines a partition of $\mathbb R_+$ allows to write
$$ I_n(h_2)=\frac 1{n^{1/\mathfrak{p}}}\sum_{k\in \mathbb Z} (c_k^2 -EX_1^2)\sum_{i=1}^n \mathbbm 1_{[\alpha_k,\alpha_{k+1})}(X_i) Y_i.$$
The right hand side can be written as a weighted sum of i.i.d. random variables. Let $J_k=\{i: X_i\in [\alpha_k,\alpha_{k+1})\}$, then
$$ I_n(h_2) = \frac 1{n^{1/\mathfrak{p}}}\sum_{k\in \mathbb Z} (c_k^2-EX_1^2) |J_k|^{1/\mathfrak{p}} \frac 1{|J_k|^{1/\mathfrak{p}}}\sum_{i\in J_k} Y_i = \sum_{k\in \mathbb Z} (c_k^2-EX_1^2) |J_k|^{1/\mathfrak{p}} Z_k$$
where $Z_k$ has the same distribution as $Y_1$ by assumption. The Fourier transform  becomes $\exp (|t|^\mathfrak{p} \sum_k (|c_k^2-EX_1^2|^{\mathfrak{p}} \left| J_k\right|)n^{-1})$, hence the variable has the same distribution as $\left(\sum_k|c_k^2-EX_1^2|^{\mathfrak{p}} \frac{|J_k|}{n}\right)^{1/\mathfrak{p}} Z_1$. Thus the second claim holds for all such $h_2$.

Let $\epsilon>0$. Choose a function of the form of $h_2$ with $X_i-EX_1^2\le c_k^2$ for $i\in J_k$ and $\sup_t P( I(h_2)\le t)-P(I(h)\le t) <\epsilon$. Then by monotonicity
$$\frac 1{\log N}\sum_{n=1}^N \frac 1 n \mathbbm 1_{\{I_n(h)\le t\}}\le 
\frac 1{\log N}\sum_{n=1}^N \frac 1n \mathbbm 1_{\{I_n(h_2)\le t\}},$$
whence 
$$\lim_{N\to\infty} \frac 1{\log N}\sum_{n=1}^N \frac 1n \mathbbm 1_{\{I_n(h)\le t\}}
\le P(I(h_2)\le t) \le P(I(h)\le t)+\epsilon \qquad a.s.$$
This together with the lower bound (obtained similarly) shows the second claim.
\end{proof}

\section{Inference on the variance of GARCH(1,1) processes}\label{sec:5}
\subsection{Estimation using Theorem \ref{theo:2.4}}\label{sec:5.1}
The confidence interval for $\mu=EX_1^2$ is provided using the standard method: If the sample has size $n\in \mathbb N$ then
$$ L_\alpha \le \frac{\sum_{i=1}^n X_i^2- n \mu}{\sqrt{n}} \le U_\beta$$
defines the confidence interval for $\mu$ with error probability $\alpha+\beta$. Contrary to the standard procedure to determine the bounds $L_\alpha$ and $U_\beta$ by the asymptotic distribution, use 
Theorem \ref{theo:2.4} instead. This needs to replace $\mu$ by a suitable value, for example by $\frac 1n \sum_{i=1}^n X_i^2$. In case of a symmetric distribution the median also is a suitable choice. 

However, as the simulations in Section \ref{sec:6} show this may lead to unreliable confidence intervals. This is due to the fact that the empirical mean is further away from the true mean than the length of the estimated confidence interval.  This leads to the problem of finding methods providing larger intervals, and hence Theorem \ref{theo:4.2} is a natural  choice to accomplish this.

The estimation of the distribution by logarithmic averages is certainly  slow. However, one may show
$$ \lim_{N\to\infty} \frac 1{\log N} \sum_{n=1}^N \frac 1{n P(S_n\le t)} \mathbbm 1_{\{S_n\le t\}} =1,$$
which implies that the speed of convergence in the tails is much faster. (Such a result appears in \cite{JW} for negatively associated sequences.)
In order to improve the estimation of the unknown distribution note that replacing $\log N$ by $\sum_{k=1}^N\frac 1k$ makes the approximation function a distribution function. Moreover the a.s. result holds as well when neglecting the first terms in the sum $\sum_{n=1}^N$ and it also holds when shifting the sample which a.s. is again a sample approximating the unknown distribution, because the distribution is invariant under the natural action of $\mathbb Z$.

\subsection{Estimation using Theorem \ref{theo:4.2}}\label{sec:5.2}

Take $(Y_i)_{i\ge 1}$ to be an i.i.d. sequence of stable random variables with Fourier transform $\exp(it-|t|^\mathfrak{p})$. Then $Y_i-1$ satisfies the hypothesis needed to apply Theorem \ref{theo:4.2}. Therefore
$$ I_n(h)= \frac 1{n^{1/\mathfrak{p}}} \sum_{i=1}^n (X_i^2-EX_1^2)(Y_i-EY_1)$$
converges weakly to $I(h)$ and the almost sure version holds as well.
Since by definition the process $\{X_n^2-EX_1^2\}_{n\in \mathbb Z}$ is orthogonal the variance of $\sum_{i=1}^n X_i^2$ grows linearly in $n$ and hence by standard arguments in probability
$$ \frac 1{n^{1/\mathfrak{p}}}\sum_{i=1}^n (X_i^2-EX_1^2) EY_1 \to 0.$$
Therefore
\begin{eqnarray*}
&& \frac 1{n^{1/\mathfrak{p}}}\sum_{i=1}^n [X_i^2-EX_1^2][Y_i-EY_1] \\
&&\qquad = 
\frac 1{n^{1/\mathfrak{p}}}\sum_{i=1}^n [X_i^2-EX_1^2]Y_i + \frac 1{n^{1/\mathfrak{p}}}\sum_{i=1}^n [X_i^2-EX_1^2]EY_1 \\
&&\qquad = \frac 1{n^{1/\mathfrak{p}}}\sum_{i=1}^n [X_i^2-EX_1^2]Y_i + o(1) \quad{\rm a.s.}
\end{eqnarray*}
It follows that the sequence $\sum_{i=1}^n [X_i^2-EX_1^2]Y_i$ satisfies the weak convergence and the almost sure weak convergence results. Thus we can get the confidence interval for $\mu=EX_1^2$ by choosing empirical quantiles $z_\alpha$ and $z_\beta$ from the almost sure approximation (with an a priori estimate for $EX_1^2$):
$$ \frac{\frac 1n \sum_{i=1}^n X_i^2 - z_\alpha n^{-(\mathfrak{p}-1)/\mathfrak{p}}}{\frac 1n \sum_{i=1}^n Y_i}\le \mu\le 
\frac{\frac 1n \sum_{i=1}^n X_i^2 - z_\beta n^{-(\mathfrak{p}-1)/\mathfrak{p}}}{\frac 1n \sum_{i=1}^n Y_i}.$$

In order to apply this procedure the same advices as in Section \ref{sec:5.1} apply. In addition, the sequence $Y_i$ is repeatedly calculated until its mean differs from $1$ by some prescribed bound. Moreover, the order of the stable distribution  can be chosen arbitrarily, the larger $\mathfrak p$ the shorter the confidence interval but the smaller the coverage rate (see the simulations in the next section).
%\end{proof}

\section{Numerical simulations}\label{sec:6}
In this section, one can find numerical simulations regarding the results shown in Sections \ref{sec:2}, \ref{sec:4} and \ref{sec:5}. Different distributions will be considered as innovations for GARCH processes; these selected distributions will present varied tail decay behaviours. It will be observed that, via decreasing the parameter $\mathfrak{p}$, the confidence intervals adjust better distributions with fatter tails.\\

First, we will see the behaviour of the known asymptotic limit  (using Theorem \ref{theo:2.2}); this can be interpreted as the limiting case when $\mathfrak{p} \to 2$.  Then, the same distributions will be tested using the resampling method with values $\mathfrak{p} \in \{ 1.8,1.65,1.5,1.35\}$. 
\\

The simulations consist of GARCH($1,1$) processes of length 600 with parameters $a_0 =a_1 = b_1 = 0.1$. When applying the limits, the sum will start in the fifth value, the first 4 will be neglected, moreover, 5 shifts of length  100 will be considered for all the simulations. For the used stable distributions $Y_i$, the distribution will be deemed adequate whenever that the empirical mean does not differ from 1 too much, that is, whenever, $|1 - \sum_{i} Y_{i} / n| < 0.2 $. \\

The following table contain the chosen distributions for the innovations processes, the confidence interval length and coverage rate for the distinct proposed methods. The proposed distributions are: the standard normal distribution $N(0,1)$, the $t$ distribution with distinct degrees of freedom and Pareto distributions, the representation $P(\alpha, x_{m})$ is chosen to represent the Pareto distribution with  scale parameter $x_{m}$ and the shape parameter $\alpha$.   \\

\begin{table}[h]
    \centering
    \begin{tabular}{cccccc}
        &$N(0,1)$ & $t_{8}$ & $t_{6}$ & $P(8,1)$ &  $P(6,1)$ \\ \hline
        $\mathfrak{p} = 2$ \\
         Interval Length& 0.0477 & 0.062 & 0.075 & 0.131 & 0.143 \\
         Coverage Rate  & 0.970 & 0.948 &  0.949 & 0.855 &0.828 \\ \hline
         $\mathfrak{p = 1.8}$  \\
          Interval Length & 0.110 &0.136 & 0.153 & 0.250 & 0.294  \\
          Coverage Rate & 0.991 &0.983& 0.985 & 0.943 & 0.924 \\ \hline
          $\mathfrak{p} = 1.65$ \\
          Interval Length & 0.151 & 0.184 & 0.192& 0.321 & 0.245\\
          Coverage Rate & 0.988 & 0.989 & 0.989 & 0.954 & 0.926 \\ \hline
          $\mathfrak{p} = 1.5$ \\
          Interval Length & 0.216  & 0.261 & 0.282 & 0.428 & 0.458 \\
          Coverage Rate &  0.999 & 0.990 & 0.994 & 0.971 & 968 \\ \hline
          $\mathfrak{p} = 1.35$ \\
          Interval Length & 0.353  & 0.400 & 0.415 & 0.603 & 0.630 \\
          Coverage Rate &  0.999 & 0.995 & 0.995 & 0.987 & 0.984 
          
    \end{tabular}
    \caption{Coverage rate and interval length for the proposed methods with distinct $\mathfrak{p}$. The simulations represent a 95$\%$ coverage rate where 1000 simulations were performed.}
    \label{tab:placeholder}
\end{table}

It appears that the standard known asymptotic limit, $\mathfrak{p} = 2$, leads to  a good enough  approximation for certain distributions, nonetheless, it seems to consistently belittle the confidence intervals whenever the innovation processes exhibit fat tails. The slower the tail decay behaviour, the worse the standard algorithm performs. \\\\
Via resampling this problem is solved, as it can be appreciated in the table. Decreasing the parameter $\mathfrak{p}$ leads to significantly better outcomes for fatter tailed distributions. Showing that, independently of the chosen innovations process there exist a $\mathfrak{p}$ which leads to the desired confidence intervals.

\vspace{1cm}

Marc Taberner Ortiz,  School of Computation, Information and Technology, Technische Universit\"at M\"unchen
\newline e-mail: marctaberner@gmail.com

Manfred Denker, Institut f\"ur Mathematische Stochastik, Universit\"at G\"ottingen
\newline e-mail: manfred.denker@mathematik.uni-goettingen.de

\end{document}